
%
\documentclass[11pt]{amsart}

\pdfoutput=1

\usepackage{amsmath}
\usepackage{fullpage}
\usepackage{xspace}
\usepackage[psamsfonts]{amssymb}
\usepackage[latin1]{inputenc}
\usepackage{graphicx,color}
\usepackage{hyperref}
\usepackage{graphicx}

\usepackage{amsmath}%
\usepackage{amsthm}%
\usepackage{amscd}
\usepackage{amsfonts}%
\usepackage{amssymb}%
\usepackage{graphicx}

\usepackage{mathrsfs}

\usepackage{tikz}
\usetikzlibrary{matrix,arrows}

\usepackage{tikz-cd}

%
\newtheorem{theorem}{Theorem}[section]

\newtheorem{conjecture}[theorem]{Conjecture}

\newtheorem{lemma}[theorem]{Lemma}

\theoremstyle{remark}

\numberwithin{equation}{section}

\newcommand{\Z}{\mathbb{Z}}

\newcommand{\Q}{\mathbb{Q}}

\newcommand{\rad}{\mathrm{rad}}

\makeatletter
\@namedef{subjclassname@2020}{%
  \textup{2020} Mathematics Subject Classification}
\makeatother

  \DeclareFontFamily{U}{wncy}{}
    \DeclareFontShape{U}{wncy}{m}{n}{<->wncyr10}{}
    \DeclareSymbolFont{mcy}{U}{wncy}{m}{n}
    \DeclareMathSymbol{\Sha}{\mathord}{mcy}{"58}

\begin{document}
\title[]{On the $abc$ and the $abcd$ conjectures}

\author{Hector Pasten}
\address{ Departamento de Matem\'aticas,
Pontificia Universidad Cat\'olica de Chile.
Facultad de Matem\'aticas,
4860 Av.\ Vicu\~na Mackenna,
Macul, RM, Chile}
\email[H. Pasten]{hector.pasten@uc.cl}%

\author{Roc\'io Sep\'ulveda-Manzo}
\address{ Departamento de Matem\'aticas,
Pontificia Universidad Cat\'olica de Chile.
Facultad de Matem\'aticas,
4860 Av.\ Vicu\~na Mackenna,
Macul, RM, Chile}
\email[R. Sep\'ulveda-Manzo]{rseplveda@uc.cl}%

\thanks{H.P. was supported by ANID Fondecyt Regular grant 1230507 from Chile.}
\date{\today}
\subjclass[2020]{Primary:  11D75; Secondary:  11J25, 11J97, 11J86} %
\keywords{$abc$ conjecture, linear forms in logarithms, $n$-terms $abc$ conjecture}%

\begin{abstract} We revisit a subexponential bound for the $abc$ conjecture due to the first author, and we establish a variation of it using linear forms in logarithms. As an application, we prove an unconditional subexponential bound towards the $4$-terms $abc$ conjecture under a suitable hypothesis on the size of the variables.
\end{abstract}

\maketitle



\section{Introduction} 

\subsection{The $abc$ conjecture} For a non-zero integer $n$ we let $\rad(n)$ be the largest positive squarefree divisor of it. Let us recall the celebrated $abc$ conjecture of Masser and Oesterl\'e:

\begin{conjecture}[The $abc$ conjecture] Let $\epsilon>0$. There is a number $K_\epsilon>0$ depending only on $\epsilon$ such that the following holds: Given $a,b,c$ coprime positive integers with $a+b=c$, we have 
$$
c\le K_\epsilon \cdot \rad(abc)^{1+\epsilon}.
$$
\end{conjecture}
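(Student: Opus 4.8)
This is the Masser--Oesterl\'e $abc$ conjecture, and as worded it remains \emph{open}: no unconditional argument is known that yields the polynomial bound $c\le K_\epsilon\cdot\rad(abc)^{1+\epsilon}$, so what follows is a speculative plan rather than a strategy I can expect to carry through. The entire difficulty is concentrated in the exponent. Producing \emph{some} finite bound $c\le F(\rad(abc))$ is comparatively easy, but forcing the growth of $c$ to be only polynomial in the radical, with exponent tending to $1$, is the essential obstruction. The plan I would pursue is to reduce the estimate to a better-understood Diophantine inequality of geometric origin and then attack that inequality directly.

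First I would invoke the standard reductions. Up to the $\epsilon$ in the exponent, the conjecture is equivalent to Szpiro's conjecture bounding the minimal discriminant of an elliptic curve by a power of its conductor: given an $abc$ triple one forms the Frey curve $y^2=x(x-a)(x+b)$, which is semistable and whose discriminant and conductor are controlled by $abc$ and $\rad(abc)$ respectively, so the target becomes $\Disc\le\rad(\Disc)^{O(1)}$ for such curves. More ambitiously, $abc$ is the special case of Vojta's height inequality for $\Pro^1$ with the divisor $\{0,1,\infty\}$, so one could instead try to establish the relevant instance of Vojta's conjecture. Since both Szpiro's and Vojta's conjectures are themselves open, however, these moves only reorganize the difficulty without removing it.

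The most concrete source of hope is the function-field analogue, where the exact statement (with no $\epsilon$ and explicit constants) is the Mason--Stothers theorem, proved by a short argument using the Wronskian $W=ab'-a'b$: this $W$ is divisible by the product of the repeated-factor parts of $a,b,c$, so a degree count on $W$ immediately controls the defect. The plan would be to manufacture an arithmetic surrogate for the derivative --- a ``Frobenius'' or arithmetic Kodaira--Spencer operator on $\Spec\Z$ --- and replay this Wronskian estimate over the integers. \textbf{This is precisely where the obstacle lies.} No such derivation on $\Z$ is available, and the closest substitute, transcendence theory via complex and $p$-adic linear forms in logarithms, produces only subexponential bounds of the shape $\log c\ll\rad(abc)^{\kappa}$ with $\kappa<1$ (the regime in which the present paper in fact operates). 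The target demands $\log c\le(1+\epsilon)\log\rad(abc)+O_\epsilon(1)$, so the best known estimates are off by an exponential, and closing that gap is exactly the content of the open conjecture; I do not have a step that accomplishes it.
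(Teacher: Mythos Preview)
Your assessment is correct and matches the paper's treatment: the statement is presented in the paper as a \emph{conjecture} (the Masser--Oesterl\'e $abc$ conjecture), not as a theorem, and the paper makes no attempt to prove it. There is no ``paper's own proof'' to compare against; the paper only establishes partial subexponential bounds (Theorems \ref{Thm1} and \ref{ThmABCD}) under additional hypotheses, precisely in the regime you describe where linear forms in logarithms yield estimates of the shape $\log c \ll \rad(bc)^\epsilon$ rather than the conjectured polynomial bound.
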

Without further restrictions, all available unconditional results towards this conjecture \cite{ST, SY1, SY2, MP} take the form
$$
\log c \le K_\epsilon \cdot \rad(abc)^{\alpha+\epsilon}
$$
for some fixed $\alpha>0$. Thus, they are exponential bounds. The sharpest result was obtained in \cite{SY2} with the exponent $\alpha=1/3$.

However, if some restrictions are imposed then subexponential unconditional bounds are available, see \cite{PastenPn21} and the references therein, specially \cite{SY2, PastenTruncated}. Let us recall the following one due to the first author:

\begin{theorem}[Theorem 1.4(1) in \cite{PastenPn21}]\label{Thm0} There is a constant $\kappa>0$ such that the following holds. Let $a,b,c$ be coprime positive integers with $a+b=c$ and suppose that for some $\eta>0$ we have $a\le c^{1-\eta}$. Then
\begin{equation}\label{EqnThm0}
\log c\le \eta^{-1}\exp\left(\kappa\sqrt{(\log \rad(abc))\log_2^* \rad(abc)}\right).
\end{equation}
\end{theorem}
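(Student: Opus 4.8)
The plan is to convert the arithmetic relation $a+b=c$ into a lower bound for a single linear form in logarithms of primes, and then to invoke an effective estimate for such forms. First I would normalize the equation. Since $a\le c^{1-\eta}$ and $b=c-a$, we have $b\ge c/2$ once $c$ is large, so
\[
0<\Lambda:=\log\frac{c}{b}=\log\Bigl(1+\frac{a}{b}\Bigr)\le \frac{a}{b}\le 2c^{-\eta},
\]
whence $\log c\le \eta^{-1}\bigl(\log\Lambda^{-1}+\log 2\bigr)$. Thus the whole problem reduces to an \emph{upper} bound for $\log\Lambda^{-1}$, that is, to a \emph{lower} bound for the positive quantity $\Lambda$; note that the gain factor $\eta^{-1}$ appearing in the statement is produced automatically by this first step, so the remaining task is independent of $\eta$.

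Next I would make the form explicit. Writing the coprime factorizations of $b$ and $c$ gives
\[
\Lambda=\sum_{p\mid bc}\bigl(v_p(c)-v_p(b)\bigr)\log p,
\]
a rational linear form in the logarithms of the primes dividing $bc$, each of which divides $R:=\rad(abc)$. The integer coefficients are at most $\log c/\log 2$, the number of logarithms equals the number of distinct prime factors of $bc$, and, writing $L=\log R$, the heights of the entries are $\log p\le L$ with $\sum_{p\mid R}\log p=L$. To this form I would apply a sharp effective estimate for linear forms in logarithms (Baker--W\"ustholz, or Matveev's refinement), yielding $\log\Lambda^{-1}\le C(n)\prod_i\log p_i\cdot\log B$, where $n$ is the number of primes, and $\log B\approx\log\log c$; the mild self-reference through $B$ is harmless and is absorbed at the end.

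The difficulty is that a direct application to \emph{all} primes dividing $R$ only gives an exponential bound: in the extremal case where $bc$ is built from the primes up to $L$ one has $n\asymp L/\log L$ and $\sum_{p\le L}\log\log p\asymp L\log\log L/\log L$, so $\log\Lambda^{-1}$ is merely $\exp\!\bigl(O(L\log\log L/\log L)\bigr)$, far larger than the target $\exp\!\bigl(\kappa\sqrt{L\,\log_2^* R}\bigr)$. The subexponential bound must therefore come from balancing the two sources of growth. Here I would dissect the primes at a threshold $y$: the primes exceeding $y$ that divide $R$ number at most $L/\log y$, so applying the logarithmic-forms estimate only to these large primes costs about $\exp\!\bigl((L/\log y)\log L\bigr)$, while the $y$-smooth contribution must be controlled separately so as to cost only a factor polynomial in $y$. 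Balancing $(L/\log y)\log L$ against $\log y$ forces $\log y\asymp\sqrt{L\log L}=\sqrt{L\,\log_2^* R}$, which reproduces exactly the exponent $\kappa\sqrt{(\log R)\log_2^* R}$; feeding this into the first step gives the claimed bound together with its factor $\eta^{-1}$.

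The main obstacle is precisely the control of the smooth part, i.e.\ bounding the effect of the primes $\le y$ without incurring the catastrophic factor $e^{\pi(y)}$ that a direct inclusion of all small primes in the logarithmic form would cost. This is where the real content lies: one must either isolate the large-prime behaviour of $\Lambda$ while keeping the smooth contribution in a form of controlled height, or bound the $y$-smooth part of $c$ by a smooth-number/counting argument. Getting this step clean, and then optimizing $y$, is exactly what separates the exponential bound from the subexponential one; by contrast the opening inequality, the bookkeeping of coefficients and heights, and the absorption of the $\log\log c$ self-reference are all routine.
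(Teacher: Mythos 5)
First, a point of reference: the paper does not prove this statement at all. It is imported verbatim as Theorem 1.4(1) of \cite{PastenPn21}, and the authors explicitly remark (in the paragraph following Theorem \ref{Thm1}) that its proof relies crucially on the bounds of \cite{PastenShimura}, i.e.\ on Shimura-curve/modular-form input that depends on $\rad(abc)$. The only argument carried out in this paper by linear forms in logarithms is Theorem \ref{Thm1preliminary}, and that method, applied to all the primes of $bc$, yields only the weaker exponent $(\log_3^*R/\log_2^*R)\log R$ of \eqref{EqnThm0earlier} --- exactly the ceiling you yourself identify --- not the exponent $\sqrt{(\log R)\log_2^*R}$ of \eqref{EqnThm0}.

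Your opening reduction is fine: $\Lambda=\log(c/b)\le 2c^{-\eta}$ does produce the factor $\eta^{-1}$, the coefficients and heights are bookkept correctly, and the $\log^*h(\xi)\approx\log\log c$ self-reference is indeed harmless. But the proposal stops precisely where the theorem's content lies. You split the primes at a threshold $y$ and then \emph{assume} that the $y$-smooth contribution can be "controlled separately so as to cost only a factor polynomial in $y$," offering no mechanism. The natural attempts fail: if you absorb the $y$-smooth parts $b_1,c_1$ of $b,c$ into a single extra generator $\xi_0=b_1/c_1$ of the multiplicative group, Lemma \ref{ThmLFL} charges you its height $h(\xi_0)$, which can be as large as $\log c$, and the resulting inequality $\eta\log c\ll K^{m+1}(\log c)(\log_2^*c)\prod_{p>y}\log p$ is circular --- the factor $\log c$ on the right swamps the left-hand side; and smooth-number counting estimates say nothing about the smooth part of one specific $c$. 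No purely transcendence-theoretic or elementary substitute for this step is known; in \cite{PastenPn21} the regime of many prime factors is handled by a genuinely different Diophantine input from \cite{PastenShimura}, which is why the bound there involves $\rad(abc)$ and why the present authors say they cannot upgrade Theorem \ref{Thm1} to the shape \eqref{EqnThm0}. As written, your balancing of $(L/\log y)\log L$ against $\log y$ reproduces the \emph{shape} of the exponent $\kappa\sqrt{(\log R)\log_2^*R}$ but does not constitute a proof.
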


Here, $\log_k$ is the $k$-th iterated logarithm and $\log_k^*(t)=\log_k(t)$ unless it takes a value less than $1$ or it is undefined, in which case we set $\log_k^*(t)=1$. This is a substantial improvement of an earlier bound due to the first author \cite{PastenTruncated} where \eqref{EqnThm0} is replaced by 
\begin{equation}\label{EqnThm0earlier}
\log c\le\eta^{-1}\kappa_\epsilon \exp\left((1+\epsilon)\frac{\log_3^* \rad(abc)}{\log_2^* \rad(abc)}\cdot \log \rad(abc)\right)
\end{equation}
for any $\epsilon>0$, where $\kappa_\epsilon>0$ only depends on $\epsilon$.


\subsection{A variation of the subexponential bound} Our first result is the following variation of the bound \eqref{EqnThm0earlier}. 
\begin{theorem}[Main theorem for $abc$]\label{Thm1} There is a constant $\kappa>0$ such that the following holds. Let $a,b,c$ be coprime positive integers with $a+b=c$ and suppose that for some $\tau>0$ we have 
\begin{equation}\label{EqnCondition}
a\le \frac{c}{\exp\left((\log c)^\tau\log_2^* c\right)}. 
\end{equation}
Then
\begin{equation}\label{EqnThm1}
\log c\le \exp\left(\tau^{-1}\kappa\cdot \frac{\log_3^* \rad(bc)}{\log_2^* \rad(bc)}\cdot \log \rad(bc)\right).
\end{equation}
In particular, if $\tau>0$ is fixed, then we have $\log c\ll_\epsilon \rad(bc)^\epsilon$ for every $\epsilon>0$.
\end{theorem}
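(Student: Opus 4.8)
The plan is to recast the equation $a+b=c$ as a (nonzero, very small) linear form in logarithms and to play the upper bound coming from hypothesis \eqref{EqnCondition} against the lower bound supplied by Baker's theory in its sharp form due to Matveev. Set $R=\rad(bc)$, let $p_1,\dots,p_n$ be the primes dividing $bc$ (so $n=\omega(bc)=\omega(R)$ and $\sum_i\log p_i=\log R$), and consider
\[
\Lambda=\log\frac{c}{b}=\sum_{i=1}^{n}e_i\log p_i,\qquad e_i=\ord_{p_i}(c)-\ord_{p_i}(b)\in\Z .
\]
Since $a>0$ we have $b<c$, hence $\Lambda\neq 0$; moreover $b/c=1-a/c$, and the elementary inequality $-\log(1-x)\le 2x$ for $0<x\le 1/2$ gives $0<\Lambda\le 2a/c$. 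Combined with \eqref{EqnCondition} this yields the upper bound $\log|\Lambda|\le \log 2-(\log c)^{\tau}\log_2^* c$.

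Next I would invoke Matveev's lower bound for a linear form in logarithms of rationals. With height parameters $\log A_i=\log p_i$ and $B=\max_i|e_i|$, and noting that each exponent satisfies $|e_i|\le 2\log c/\log 2$ so that $\log B\ll \log_2 c$, it produces an absolute constant $C>0$ with
\[
\log|\Lambda|\ge -C^{\,n}\Big(\prod_{i=1}^{n}\log p_i\Big)\log B .
\]
The reason for the precise shape of \eqref{EqnCondition} is exactly that the factor $\log B\ll\log_2^* c$ matches the factor $\log_2^* c$ on the left: comparing the two bounds on $\log|\Lambda|$ and cancelling this common factor (the additive $\log 2$ being negligible) leaves $(\log c)^{\tau}\ll C^{\,n}\prod_{i}\log p_i$. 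Taking logarithms reduces the whole problem to two arithmetic estimates in terms of $R$, namely $\tau\log_2 c\ll n\log C+\sum_{i}\log_2 p_i$.

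Finally I would carry out the prime counting. The standard bound $\omega(R)\le(1+o(1))\log R/\log_2 R$ controls the first term by $\ll \log R/\log_2 R$, which is negligible against the target. For the main term, writing $x_i=\log p_i$ and using concavity of $\log$ (AM–GM) gives $\sum_i\log_2 p_i=\sum_i\log x_i\le n\log(\log R/n)$; the function $t\mapsto t\log(\log R/t)$ is increasing on $(0,\log R/e)$, and since $n\le(1+o(1))\log R/\log_2 R$ lies in this range, the maximum is attained at $n\asymp \log R/\log_2 R$, producing $\sum_i\log_2 p_i\ll(\log_3 R/\log_2 R)\log R$. Substituting back yields $\log_2 c\ll\tau^{-1}(\log_3^* R/\log_2^* R)\log R$, which is \eqref{EqnThm1} after exponentiating and absorbing small values of $R$ into $\kappa$. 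The ``in particular'' clause then follows immediately, since $\log_3^* R/\log_2^* R\to 0$ forces the exponent to drop below $\epsilon\log\rad(bc)$ for all large $R$, with the finitely many small cases absorbed into the implied constant.

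The step I expect to be the main obstacle is the interface between the transcendence input and the hypothesis: one must verify that Matveev's estimate genuinely yields the product $\prod_i\log p_i$ together with only an exponential-in-$n$ constant and a single $\log B$ factor, and that this $\log B$ is truly of size $\log_2^* c$ so that it cancels the corresponding factor engineered into \eqref{EqnCondition}. The secondary delicate point is the optimization $\sum_i\log_2 p_i\ll(\log_3 R/\log_2 R)\log R$, where it is essential that the admissible range for $n$ is cut off by $\omega(R)\asymp\log R/\log_2 R$ rather than by the unconstrained optimum $n\approx\log R/e$, as it is precisely this constraint that yields the correct power $\log_3^*/\log_2^*$ of the iterated logarithms.
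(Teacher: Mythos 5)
Your proposal is correct and follows essentially the same route as the paper: apply Matveev's theorem to the linear form built from the primes of $bc$ (the paper uses the Evertse--Gy\"ory packaging of Lemma \ref{ThmLFL} applied to $\xi=b/c$, so that $|1-\xi|=a/c$ and the factor $\log^* h(\xi)=\log_2^* c$ plays the role of your $\log B$), cancel the $\log_2^* c$ factor against the one engineered into \eqref{EqnCondition}, and then perform the same AM--GM plus $\omega(R)\ll \log R/\log_2^* R$ optimization to get $\prod_i\log p_i\le \exp\bigl(O((\log_3^* R/\log_2^* R)\log R)\bigr)$. The only cosmetic differences are that the paper works multiplicatively with $(K\log R/m)^m$ rather than with $\sum_i\log_2 p_i$, and bounds $a/c$ directly instead of passing through $-\log(1-x)\le 2x$.
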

One can be more precise about the constant $\kappa$, but this is not relevant for our discussion. Theorem \ref{Thm1} can be deduced from the results in \cite{PastenTruncated} but we prefer to give a self-contained (and somewhat simpler) proof here for the sake of completeness. See Section  \ref{SecProofThm1}.

There are two main differences between \eqref{EqnThm0earlier} and Theorem \ref{Thm1}. First, the condition \eqref{EqnCondition} is less restrictive than the condition $a\le c^{1-\eta}$ when $\eta>0$ is fixed. The second difference is more substantial: In \eqref{EqnThm1} the term $\rad(a)$ does not appear, unlike the bound \eqref{EqnThm0earlier}. This difference turns out to be a key aspect in our application to the $4$-terms $abc$ conjecture to be discussed below.

One can ask whether the bound \eqref{EqnThm1} can be improved to something similar to \eqref{EqnThm0}. At present this seems difficult: The bounds coming from \cite{PastenShimura} are crucial in the proof of Theorem \ref{Thm0}, and these in fact depend on $\rad(abc)$ rather than just $\rad(bc)$.


\subsection{The $n$-terms $abc$ conjecture}

In \cite{BB} Browkin and Brzezi\'nski proposed the following extension of the $abc$ conjecture to the case of $n$ terms, with $n\ge 3$.
\begin{conjecture}[The $n$-conjecture] \label{Conjnterms} Let $n\ge3$. There is a number $M_n$ depending only on $n$ such that the following holds: Given integers $x_1,...,x_n$ satisfying
\begin{itemize}
\item[(i)] $\gcd(x_1,...,x_n)=1$;
\item[(ii)] $x_1+...+x_n=0$; and
\item[(iii)] no proper sub-sum in (ii) vanishes
\end{itemize}
one has that $\max_{1\le j\le n} |x_j|\le \rad(x_1x_2\cdots x_n)^{M_n}$.
\end{conjecture}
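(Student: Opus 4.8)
The natural strategy is grouping together with induction on $n$, aiming to reduce the $n$-term relation to the three-term case, which is precisely the $abc$ conjecture: with $x_1=a$, $x_2=b$, $x_3=-c$ the relation $x_1+x_2+x_3=0$ becomes $a+b=c$, $\max_j|x_j|=c$, and $\rad(x_1x_2x_3)=\rad(abc)$, so the $abc$ conjecture yields the desired bound with $M_3$ any exponent exceeding $1$ (the constant $K_\epsilon$ being absorbed into the radical for large $\rad$). For general $n$ I would partition $\{1,\dots,n\}$ into three non-empty blocks, form the block sums $A,B,C$ with $A+B+C=0$, divide out $\gcd(A,B,C)$ to restore coprimality, apply the three-term bound $\max(|A|,|B|,|C|)\le K_\epsilon\rad(ABC)^{1+\epsilon}$, and recurse on the shorter relation concealed inside the largest block. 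Iterating this over a suitable sequence of partitions should, in principle, produce $\max_j|x_j|\le\rad(x_1\cdots x_n)^{M_n}$ with $M_n$ growing explicitly in $n$.

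Concretely, the key steps, in order, would be: (a) normalize so that the maximal term $|x_n|$ dominates and isolate it as one block; (b) arrange the partition so that two of the three blocks are single variables, whose radicals then automatically divide $\rad(x_1\cdots x_n)$; (c) control the gcd extracted in steps (a)--(b) and check that it does not inflate the radical; (d) apply the three-term estimate to bound the largest block, and feed this bound into an induction hypothesis for the relation hidden inside that block.

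The main obstacle is exactly the radical bookkeeping in steps (b)--(d): a block sum such as $C=x_3+\cdots+x_n$ generically acquires prime factors dividing none of the individual $x_j$, so that $\rad(ABC)$ is \emph{not} bounded in terms of $\rad(x_1\cdots x_n)$, and the three-term bound then measures arithmetic entirely foreign to the original variables. This is the structural reason why the $n$-conjecture is strictly stronger than---and is not known to follow from---the $abc$ conjecture as soon as $n\ge4$. Unconditionally, the only tools that bypass $abc$, namely the subspace theorem and linear forms in logarithms, furnish either ineffective finiteness or merely exponential and subexponential estimates, never the polynomial dependence on the radical that the conjecture demands.

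I therefore expect the full conjecture, with a uniform exponent $M_n$, to remain out of reach by these methods; the realistic partial result rests on the variant in Theorem~\ref{Thm1}. Since the right-hand side of \eqref{EqnThm1} involves only $\rad(bc)$ and not $\rad(a)$, one can assign the single block of uncontrolled radical to the role of $a$---whose radical has been purged from the estimate---and, provided that block is small relative to $c$ in the sense of \eqref{EqnCondition}, extract an unconditional subexponential bound towards the case $n=4$, which is the strongest outcome one can currently hope for by this route.
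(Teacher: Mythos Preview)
Your assessment is correct, and in fact there is nothing to compare against: the statement is a \emph{conjecture}, and the paper does not prove it. The paper explicitly remarks that no unconditional result---not even an exponential one---is known towards Conjecture~\ref{Conjnterms} over $\Z$ for any $n\ge 4$. Your analysis of why the grouping-and-induction strategy fails is exactly the right diagnosis: a block sum like $x_3+\cdots+x_n$ can acquire primes dividing none of the $x_j$, so $\rad(ABC)$ is not controlled by $\rad(x_1\cdots x_n)$, and the three-term bound becomes useless. This is indeed the structural obstruction, and your final paragraph correctly anticipates the paper's actual contribution: Theorem~\ref{Thm1} drops $\rad(a)$ from the estimate, so in the $n=4$ case one can assign the uncontrolled block sum to the role of $a$ and obtain the subexponential bound of Theorem~\ref{ThmABCD} under the size hypothesis on $\min_{i<j}|x_i+x_j|$.
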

In fact, in \cite{BB} it is also conjectured that for every $\epsilon>0$ one can take $M_n=2n-5+\epsilon$ up to finitely many exceptions. See also \cite{Bruin,VojtaABC}.

The previous conjecture is also discussed in \cite{Browkin} and a modification of it is  proposed:

\begin{conjecture}[The strong $n$-conjecture] \label{Conjsnterms} Let $n\ge3$. There is a number $M_n$ depending only on $n$ such that the following holds: Given non-zero integers $x_1,...,x_n$ satisfying
\begin{itemize}
\item[(i)] $\gcd(x_i,x_j)=1$ for all $i\ne j$; and
\item[(ii)] $x_1+...+x_n=0$
\end{itemize}
one has that $\max_{1\le j\le n} |x_j|\le \rad(x_1x_2\cdots x_n)^{M_n}$.
\end{conjecture}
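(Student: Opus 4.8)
Because the final statement is the strong $n$-conjecture rather than a proved result, I cannot expect an unconditional argument; the natural plan is to try to deduce it from the $abc$ conjecture stated above (which I am permitted to assume) by induction on $n$. The base case $n=3$ is essentially $abc$ itself. Given pairwise coprime non-zero $x_1,x_2,x_3$ with $x_1+x_2+x_3=0$, after permuting and changing the overall sign I may assume $x_3=-(x_1+x_2)$ with $x_1,x_2>0$; then $a=x_1$, $b=x_2$, $c=|x_3|$ are coprime positive integers with $a+b=c$, and pairwise coprimality gives $\rad(abc)=\rad(x_1x_2x_3)$. The $abc$ conjecture yields $\max_j|x_j|=c\le K_\epsilon\,\rad(x_1x_2x_3)^{1+\epsilon}$, and absorbing the constant $K_\epsilon$ into the exponent (using $\rad\ge 2$) shows that some $M_3$ works.

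For the inductive step I would collapse two summands. Writing $s=x_1+x_2$ and assuming $s\ne0$ (the degenerate case forces $x_1=-x_2=\pm1$ and is handled directly), the relation becomes the $(n-1)$-term vanishing sum $s+x_3+\cdots+x_n=0$, to which I would like to apply the inductive hypothesis, and then control $|x_1|,|x_2|$ through the three-term relation $x_1+x_2=s$ via the base case. If this went through, combining the two bounds would produce $M_n$ in terms of $M_{n-1}$.

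The \emph{main obstacle} is that neither ingredient of this induction survives grouping. First, pairwise coprimality is destroyed: a prime can divide both $s=x_1+x_2$ and some $x_j$ with $j\ge3$, so $(s,x_3,\dots,x_n)$ is generally not pairwise coprime. More seriously, $\rad(s)=\rad(x_1+x_2)$ is not controlled by $\rad(x_1\cdots x_n)$, since $x_1+x_2$ may be divisible by primes dividing none of the $x_i$; these spurious primes enter every estimate along the induction and cannot be bounded away. This reflects the fact that for $n\ge4$ the conjecture is a genuinely higher-dimensional statement (integral points on a hyperplane of $\mathbb{P}^{n-1}$ relative to the coordinate hyperplanes) that is not a formal consequence of the one-dimensional case $abc$, which is exactly why it remains open.

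The route I would actually pursue, and which gives the paper's application, is to forgo the full polynomial bound and instead impose a size restriction forcing one grouped sum to be small. For $n=4$, taking $s=x_3+x_4$ small relative to the largest variable lets $s$ occupy the role of the summand whose radical is absent from Theorem \ref{Thm1}: since the bound \eqref{EqnThm1} depends on $\rad(bc)$ but not on $\rad(a)$, choosing $a=|s|$ in the three-term relation sidesteps the uncontrolled radical $\rad(x_3+x_4)$ entirely, with the size hypothesis \eqref{EqnCondition} supplying the required smallness. This produces an unconditional subexponential bound under a hypothesis on the sizes of the variables, at the cost of the full strength of the conjecture.
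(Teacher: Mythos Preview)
The statement is a conjecture, not a theorem, and the paper offers no proof of it; you correctly identify this at the outset. Your analysis of why the naive induction on $n$ collapses---loss of pairwise coprimality and, more fatally, the appearance of the uncontrolled $\rad(x_1+x_2)$---is accurate and matches the standard understanding of why the $n$-conjecture does not reduce to $abc$. Your final paragraph also correctly reconstructs the paper's actual contribution for $n=4$: impose a size hypothesis so that the grouped sum can play the role of $a$ in Theorem~\ref{Thm1}, whose bound omits $\rad(a)$ and therefore dodges the spurious radical. So there is no gap to flag; your write-up is an honest account of why the conjecture remains open and how the paper extracts a conditional subexponential bound instead.
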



\subsection{The case of four variables: an unconditional result}

While the polynomial analogues of Conjectures \ref{Conjnterms} and \ref{Conjsnterms} are known (see \cite{BB,BM,SS} and the references therein) we are not aware of any unconditional result (not even exponential!) for these conjectures over $\Z$ for any $n\ge 4$. As an application of Theorem \ref{Thm1}, we prove the following unconditional bound for the (strong) $n$-conjecture when $n=4$, usually called the $abcd$ conjecture.

\begin{theorem}[Subexponential bound for the $abcd$ conjecture]\label{ThmABCD} There is an absolute constant $\kappa>0$ such that the following holds: Let $x_1,x_2,x_3,x_4$ be pairwise coprime non-zero integers with 
$$
x_1+x_2+x_3+x_4=0.
$$
Let $H=\max_j|x_j|$ and let us assume that for some $\tau>0$ we have
$$
\min_{i<j}|x_i+x_j|\le \frac{H}{\exp\left((\log H)^\tau\log_2^* H\right)}.
$$
Then, writing $R=\rad(x_1x_2x_3x_4)$, we have
$$
\log H \le \exp\left(\tau^{-1}\kappa\cdot \frac{\log_3^* R}{\log_2^*R}\cdot \log R \right).
$$
In particular, if $\tau>0$ is fixed, then for every $\epsilon>0$ we have $\log H \ll_\epsilon \rad(x_1x_2x_3x_4)^\epsilon$.
\end{theorem}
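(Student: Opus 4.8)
The plan is to deduce Theorem \ref{ThmABCD} from Theorem \ref{Thm1} by collapsing the four-term relation into a single three-term $abc$ equation. Among the three ways of splitting $\{x_1,x_2,x_3,x_4\}$ into two pairs, note that within any such splitting the two pair-sums are negatives of each other, since the four terms sum to $0$; hence the six quantities $x_i+x_j$ fall into three groups of equal absolute value. Consequently the minimum $\min_{i<j}|x_i+x_j|$ is attained by a splitting $\{p,q\}\cup\{k,l\}$ in which both $|x_p+x_q|$ and $|x_k+x_l|$ equal this minimum; I write $s$ for the common value. If $s=0$, then pairwise coprimality forces every $|x_j|=1$ and the statement is trivial, so I assume $s>0$ (and that $H$ is large, bounded $H$ being absorbed into $\kappa$).

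Next I would arrange for the largest variable to sit in the pair I use. The index $m$ with $|x_m|=H$ lies in one of the two pairs of the chosen splitting; relabel so that it lies in $\{p,q\}$ with $|x_p|=H$. Consider $x_p+x_q=\pm s$. Since $x_p,x_q$ are coprime, so are $x_p,s$ and $x_q,s$ (because $\gcd(x_p,x_p+x_q)=\gcd(x_p,x_q)=1$), so $x_p,x_q,s$ are pairwise coprime. As $|x_p|=H>s$ while $|x_q|\le H$, a short sign analysis of the vanishing sum $x_p+x_q\mp s=0$ shows that $x_p$ is the unique term of largest absolute value and the other two share the opposite sign, yielding the genuine relation
$$
A+B=C,\qquad A=s,\quad B=H-s,\quad C=H,
$$
with $A,B,C$ positive, pairwise coprime, and $C=H$; the other sign would force $|x_q|=H+s>H$, contradicting the maximality of $H$.

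With this equation I would apply Theorem \ref{Thm1} taking $a=A=s$, $b=B$, $c=C=H$. The hypothesis \eqref{EqnCondition} then reads exactly $s\le H/\exp\!\big((\log H)^\tau\log_2^* H\big)$, which is precisely the assumed bound on $\min_{i<j}|x_i+x_j|$. Theorem \ref{Thm1} gives
$$
\log H=\log C\le \exp\left(\tau^{-1}\kappa\cdot \frac{\log_3^*\rad(BC)}{\log_2^*\rad(BC)}\cdot\log\rad(BC)\right).
$$
Here is the decisive point, and the reason the $\rad(bc)$-shape of Theorem \ref{Thm1} (rather than a bound involving $\rad(abc)$) is needed: one has $\rad(BC)=\rad(x_px_q)$, which divides $R=\rad(x_1x_2x_3x_4)$, so $\rad(BC)\le R$; by contrast the excluded factor $\rad(A)=\rad(s)$ is the radical of the derived quantity $s=\pm(x_p+x_q)$, which in general does not divide $R$. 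Since $t\mapsto (\log_3^* t/\log_2^* t)\log t$ is increasing for large $t$, monotonicity upgrades $\rad(BC)\le R$ to the stated bound in terms of $R$.

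The single genuine obstacle is exactly the one just highlighted: any cruder $abc$ input carrying a $\rad(abc)$ (equivalently a $\rad(a)$) term would introduce $\rad(s)$, which is not dominated by $R$, so avoiding it is precisely what Theorem \ref{Thm1} buys us. The remaining ingredients—selecting the minimal-sum splitting, placing $H$ in the pair to be used, the sign bookkeeping, and the degenerate cases $s=0$ or bounded $H$—are routine.
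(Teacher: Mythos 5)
Your proposal is correct and follows essentially the same route as the paper: reduce to the three-term relation $x_p+x_q=\pm s$ for the minimal-sum pair containing the maximal variable (using that opposite pair-sums have equal absolute value), check pairwise coprimality, and apply Theorem \ref{Thm1}, whose independence from $\rad(a)$ is exactly what lets $\rad(s)$ be discarded. You are somewhat more explicit than the paper about the sign bookkeeping, the degenerate case $s=0$, and the monotonicity step passing from $\rad(x_px_q)$ to $R$, but these are the same argument.
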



\section{Subexponential $abc$ without $\rad(a)$} \label{SecProofThm1}

For a rational number $q=u/v$ with $u,v$ coprime integers, its logarithmic height is defined as 
$$
h(q)=\log \max\{|u|,|v|\}.
$$
The following result is essentially due to Matveev \cite{Matveev} and it comes from the theory of linear forms in logarithms. See Theorem 4.2.1 in \cite{EG} for this version.
\begin{lemma}[Linear forms in logarithms]\label{ThmLFL} There is an absolute constant $K>0$ with the following property: Let $\xi_1,...,\xi_m\in \Q^\times$ and let $\xi\ne 1$ be an element in the multiplicative group generated by the numbers $\xi_j$. Then
$$
-\log |1-\xi| \le K^m\cdot \left(\log^* h(\xi)\right)\prod_{j=1}^mh(\xi_j).
$$
\end{lemma}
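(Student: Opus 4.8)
The plan is to obtain this statement as a specialization to $\Q$ of Matveev's lower bound for linear forms in logarithms, in the packaged form recorded as Theorem 4.2.1 of \cite{EG}; the deep input is that cited theorem itself, so the bulk of my argument is the routine reduction that brings the quantity $-\log|1-\xi|$ into the shape of such a bound.

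First I would dispose of the cases where the inequality is trivial. If $\xi<0$ then $1-\xi>1$, so $-\log|1-\xi|<0$ and there is nothing to prove. Likewise, if $\xi>0$ but $|1-\xi|\geq 1/2$, then $-\log|1-\xi|\leq\log 2$, which is absorbed into the right-hand side: the group generated by the $\xi_j$ contains the nontrivial element $\xi$, so at least one $\xi_j\neq\pm1$, forcing $\prod_j h(\xi_j)>0$, and small additive constants are swallowed by $K^m$. Thus I may assume $\xi$ is a positive rational with $\xi\in(1/2,3/2)$.

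In this regime $\log\xi$ is real and close to $0$, and the mean value theorem applied to $t\mapsto\log t$ on the interval between $\xi$ and $1$ gives $|{\log\xi}|\leq 2|1-\xi|$, hence $-\log|1-\xi|\leq\log 2-\log|{\log\xi}|$. It therefore suffices to bound $-\log|{\log\xi}|$ from above. Writing $\xi=\prod_{j=1}^m\xi_j^{b_j}$ with $b_j\in\Z$ (possible since $\xi$ lies in the multiplicative group generated by the $\xi_j$ and $\xi>0$), we have
$$
\log\xi=\sum_{j=1}^m b_j\log\xi_j=:\Lambda\neq 0,
$$
since $\xi\neq 1$. This is exactly a nonzero linear form in the logarithms of the $\xi_j$ with integer coefficients, to which Matveev's theorem in the form of Theorem 4.2.1 of \cite{EG} applies. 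That result yields a bound of the shape $-\log|\Lambda|\leq K_0^m(\log^* H)\prod_j A_j$, where $K_0$ is absolute, the archimedean parameters satisfy $A_j\ll\max(h(\xi_j),1)\ll h(\xi_j)$ for the nontrivial factors (using $h(\xi_j)\geq\log 2$), and $H$ measures the size of the coefficients $b_j$. Combining this with the reduction above and adjusting constants gives the claimed inequality, provided $\log^* H$ is controlled by $\log^* h(\xi)$.

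The one point requiring care — and the step I expect to be the crux — is the passage from the coefficient size $H=\max_j|b_j|$ to $h(\xi)$, so that the factor $\log^* H$ becomes the asserted $\log^* h(\xi)$. The representation $\xi=\prod\xi_j^{b_j}$ can be taken with the $b_j$ as small as possible, and then the positive definiteness of the height pairing on a maximal multiplicatively independent subset of the $\xi_j$ forces a lower bound $h(\xi)\gg\max_j|b_j|$, with the implied constant depending only on the $\xi_j$ in a manner absorbed into $K^m\prod_j h(\xi_j)$; equivalently, this normalization is already built into the formulation of Theorem 4.2.1 of \cite{EG}, which is phrased directly in terms of the height of the element $\xi$ rather than the raw coefficients $b_j$. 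Tracking the absolute constants through these reductions then yields the single constant $K$ in the statement.
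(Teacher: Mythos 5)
The paper does not actually prove Lemma \ref{ThmLFL}: it is quoted from the literature (Theorem 4.2.1 of \cite{EG}, which is Matveev's theorem repackaged for multiplicative groups), so your proposal, which ultimately rests on the same citation, takes the same route and is acceptable on that basis. Two caveats about the reduction you wrap around the citation. First, most of it is redundant: Theorem 4.2.1 of \cite{EG} is already stated for an arbitrary $\xi\ne 1$ in the group generated by the $\xi_j$ and already carries $\log^*$ of (essentially) $h(\xi)$ rather than of a coefficient bound $B=\max_j|b_j|$, so neither the mean value theorem step nor the trivial-case analysis is needed; specializing to $\Q$ and absorbing the explicit constant $c(m,d)$ and the modified heights $h'(\xi_j)$ into $K^m$ is the entire argument. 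Second, the one step of your reduction that is not redundant --- converting $\log^* B$ into $\log^* h(\xi)$ --- is precisely where your self-contained argument is too thin: the implied constant in $h(\xi)\gg\max_j|b_j|$ lands \emph{inside} the $\log^*$, and it is not automatic that $\log^*$ of its reciprocal is dominated by $K^m\prod_j h(\xi_j)$; making this work requires choosing a good representation $\xi=\prod_j\xi_j^{b_j}$ by a geometry-of-numbers argument on the relation lattice, which is exactly the content of Evertse--Gy\"ory's reformulation. If you were genuinely deriving the lemma from the coefficient form of Matveev's inequality, this would be a gap; since you fall back on the formulation of \cite{EG}, it is not. Finally, a small slip: one $\xi_j\ne\pm1$ does not force $\prod_j h(\xi_j)>0$, since another factor may be $\pm1$ and contribute height $0$; as literally stated the lemma degenerates in that case (harmless in this paper, where the $\xi_j$ are primes, and handled in \cite{EG} by using heights truncated from below).
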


With this we can prove the following preliminary result:

\begin{theorem}[Preliminary subexponential bound for $abc$]\label{Thm1preliminary} There is a constant $\kappa>0$ such that the following holds: Let $a,b,c$ be coprime positive integers with $a+b=c$. Then
\begin{equation}\label{EqnThm1v2}
\frac{\log (c/a)}{\log_2^* c}\le \exp\left(\kappa\cdot \frac{\log_3^* \rad(bc)}{\log_2^* \rad(bc)}\cdot \log \rad(bc)\right).
\end{equation}
\end{theorem}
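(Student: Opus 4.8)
The plan is to apply the linear forms in logarithms bound of Lemma~\ref{ThmLFL} to the rational number $\xi=b/c$. Since $a+b=c$ we have $1-\xi=a/c$, so that $-\log|1-\xi|=\log(c/a)$, which is positive because $a<c$, and $\xi\ne 1$ because $a\ge 1$. The hypotheses force $b$ and $c$ to be coprime, and since $c>b$ we get $h(\xi)=\log c$. Writing $p_1,\dots,p_m$ for the distinct primes dividing $bc$, the number $\xi=b/c$ lies in the multiplicative group they generate, so Lemma~\ref{ThmLFL} applied with $\xi_j=p_j$ (so that $h(p_j)=\log p_j$) yields
\[
\log(c/a)\le K^m\,\bigl(\log^*\log c\bigr)\prod_{j=1}^m\log p_j = K^m\,(\log_2^* c)\prod_{j=1}^m\log p_j .
\]
Dividing by $\log_2^* c$ reduces the theorem to bounding $K^m\prod_{j=1}^m\log p_j$ purely in terms of $R=\rad(bc)=p_1\cdots p_m$ and $m$.

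Next I would estimate this quantity. By the AM--GM inequality applied to the positive reals $\log p_1,\dots,\log p_m$, together with $\sum_j\log p_j=\log R$, one gets $\prod_{j=1}^m\log p_j\le(\log R/m)^m$. Taking logarithms,
\[
\log\Bigl(K^m\prod_{j=1}^m\log p_j\Bigr)\le m\bigl(\log K+\log_2 R-\log m\bigr)=:g(m).
\]
It then remains to show $g(m)\le \kappa\,(\log_3^* R/\log_2^* R)\log R$. For this I would invoke the standard estimate $m\le C\log R/\log_2 R$ for the number of distinct prime factors (with $C$ absolute), together with an analysis of $g$: since $g'(m)=\log K+\log_2 R-\log m-1>0$ throughout the range $1\le m\le C\log R/\log_2 R$ once $R$ is large, the function $g$ is increasing there, and hence $g(m)\le g(C\log R/\log_2 R)$.

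Evaluating at $m_1=C\log R/\log_2 R$ and using $\log m_1=\log C+\log_2 R-\log_3 R$ gives
\[
g(m_1)=\frac{C\log R}{\log_2 R}\bigl(\log K-\log C+\log_3 R\bigr),
\]
so, as $\log_3 R\to\infty$, the constant $\log K-\log C$ is absorbed and we obtain $g(m)\ll (\log_3 R/\log_2 R)\log R$ for all sufficiently large $R$; the finitely many remaining (bounded) cases are handled by enlarging $\kappa$, the $*$-conventions guaranteeing that the right-hand side stays bounded below.

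The main obstacle is precisely this optimization in $m$. The exponential saving over the trivial estimate $\prod_j\log p_j\le(\log R)^m$ (which only produces $\exp(\log R)$, i.e.\ a factor essentially $R$ and hence no subexponential gain) comes entirely from combining AM--GM with the extremal size $m\asymp\log R/\log_2 R$ of $\omega(\rad(bc))$, and one must check carefully that $g$ is monotone on the admissible range so that substituting the largest permissible $m$ is legitimate. Everything else — the reduction via Lemma~\ref{ThmLFL}, the coprimality of $b$ and $c$, and the identification $\log^*\log c=\log_2^* c$ — is routine.
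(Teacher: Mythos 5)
Your proposal is correct and follows essentially the same route as the paper: apply Lemma~\ref{ThmLFL} to $\xi=b/c$ with the primes of $bc$ as generators, use AM--GM to get $\prod_j\log p_j\le(\log R/m)^m$, and then exploit the monotonicity of $t\mapsto(K\log R/t)^t$ on the admissible range together with $m\ll\log R/\log_2 R$ to substitute the extremal value of $m$. Your logarithmic reformulation via $g(m)$ and the derivative computation $g'(m)=\log K+\log_2R-\log m-1>0$ is exactly the paper's observation that the function is increasing for $t\le K(\log R)/e$, so the two arguments coincide.
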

\begin{proof} Let $\xi=b/c$ and choose $\xi_j=p_j$ for $j=1,...,m$ as the different prime divisors of $bc$. Then $1-\xi=a/c$ and $h(\xi)=\log c$, and from Lemma \ref{ThmLFL} we obtain
$$
\frac{\log(c/a)}{\log_2^* c}\le K^m\prod_{j=1}^m\log p_j\le \left(\frac{K\log R}{m}\right)^m
$$
where $R=\rad(bc)$ and we used the arithmetic-geometric mean inequality. 

Recall that $m$ is the number of different prime factors of $bc$, hence, of $R$. Thus, from well-known elementary bounds we have
$$
m\le M (\log R)/\log_2^*R
$$
for a suitable constant $M>1$. The function
$$
t\mapsto \left(K(\log R)/t\right)^t\quad\mbox{for $t>0$}
$$ 
is increasing in the range $0<t \le K(\log R)/e$. In particular, adjusting $K$ if necessary to achieve 
$$
M (\log R)/\log_2^*R\le K(\log R)/e, 
$$
 we obtain
$$
 \left(\frac{K\log R}{m}\right)^m \le \left((K/M)\log_2^* R\right)^{M(\log R)/\log_2^* R}
$$
and the result follows.
\end{proof}

\begin{proof}[Proof of Theorem \ref{Thm1}] The assumption \eqref{EqnCondition} gives $(\log (c/a))/\log_2^* c\ge (\log c)^\tau$ and the result follows from Theorem \ref{Thm1preliminary}.
\end{proof}


\section{Application: a bound for the $4$-terms $abc$ conjecture}

\begin{proof}[Proof of Theorem \ref{ThmABCD}] Without loss of generality we may assume that
$$
\min_{i<j} |x_i+x_j| = |x_1+x_2|.
$$
Up to symmetry we have two cases: $H=|x_1|$ or $H=|x_3|$.

In the first case we define $y=x_1+x_2$ and note that the three integers in this equation are coprime and non-zero. Rearranging terms to obtain an equation $a+b=c$ with positive integers we see that $c=|x_1|$ (by maximality of $H=|x_1|$), one can choose $a=|y|$, and assumption \eqref{EqnCondition} is satisfied with these choices. Theorem \ref{Thm1} then gives
$$
\log H = \log |x_1| \le \exp\left(\tau^{-1}\kappa \cdot \frac{\log_3^*\rad(x_1x_2)}{\log_2^*\rad(x_1x_2)}\cdot \log\rad (x_1x_2)\right).
$$
The case $H=|x_3|$ is analyzed in the same way after noticing that $|x_1+x_2|=|x_3+x_4|$.
\end{proof}

\section{Acknowledgments}

H.P. was supported by ANID Fondecyt Regular grant 1230507 from Chile. 


\end{document}